\numberwithin{equation}{section}
\numberwithin{figure}{section}
\theoremstyle{plain}
\newtheorem{thm}{\protect\theoremname}
\theoremstyle{plain}
\newtheorem{prop}[thm]{\protect\propositionname}
\theoremstyle{plain}
\newtheorem{lem}[thm]{\protect\lemmaname}
\providecommand{\lemmaname}{Lemma}
\providecommand{\propositionname}{Proposition}
\providecommand{\theoremname}{Theorem}
\begin{document}
\global\long\def\e{e}%
\global\long\def\V{{\rm Vol}}%
\global\long\def\bs{\boldsymbol{\sigma}}%
\global\long\def\br{\boldsymbol{\rho}}%
\global\long\def\bp{\boldsymbol{\pi}}%
\global\long\def\btau{\boldsymbol{\tau}}%
\global\long\def\bx{\mathbf{x}}%
\global\long\def\by{\mathbf{y}}%
\global\long\def\bz{\mathbf{z}}%
\global\long\def\bv{\mathbf{v}}%
\global\long\def\bu{\mathbf{u}}%
\global\long\def\bi{\mathbf{i}}%
\global\long\def\bn{\mathbf{n}}%
\global\long\def\grad{\nabla_{sp}}%
\global\long\def\Hess{\nabla_{sp}^{2}}%
\global\long\def\lp{\Delta_{sp}}%
\global\long\def\gradE{\nabla_{\text{Euc}}}%
\global\long\def\HessE{\nabla_{\text{Euc}}^{2}}%
\global\long\def\HessEN{\hat{\nabla}_{\text{Euc}}^{2}}%
\global\long\def\ddq{\frac{d}{dR}}%
\global\long\def\qs{q_{\star}}%
\global\long\def\qss{q_{\star\star}}%
\global\long\def\lm{\lambda_{min}}%
\global\long\def\Es{E_{\star}}%
\global\long\def\As{A_{\star}}%
\global\long\def\EH{E_{\Hess}}%
\global\long\def\Esh{\hat{E}_{\star}}%
\global\long\def\ds{d_{\star}}%
\global\long\def\Cs{\mathscr{C}_{\star}}%
\global\long\def\nh{\boldsymbol{\hat{\mathbf{n}}}}%
\global\long\def\BN{\mathbb{B}^{N}}%
\global\long\def\ii{\mathbf{i}}%
\global\long\def\SN{\mathbb{S}^{N-1}}%
\global\long\def\SM{\mathbb{S}^{M-1}}%
\global\long\def\SNq{\mathbb{S}^{N-1}(q)}%
\global\long\def\SNqd{\mathbb{S}^{N-1}(q_{d})}%
\global\long\def\SNqp{\mathbb{S}^{N-1}(q_{P})}%
\global\long\def\nd{\nu^{(\delta)}}%
\global\long\def\nz{\nu^{(0)}}%
\global\long\def\cls{c_{LS}}%
\global\long\def\qls{q_{LS}}%
\global\long\def\dls{\delta_{LS}}%
\global\long\def\E{\mathbb{E}}%
\global\long\def\P{\mathbb{P}}%
\global\long\def\R{\mathbb{R}}%
\global\long\def\spp{{\rm Supp}(\mu_{P})}%
\global\long\def\indic{\mathbf{1}}%
\global\long\def\lsc{\mu_{{\rm sc}}}%
\newcommand{\SNarg}[1]{\mathbb S^{N-1}(#1)}
\global\long\def\se{s(E)}%
\global\long\def\ses{s(\Es)}%
\global\long\def\so{s(0)}%
\global\long\def\sef{s(E_{f})}%
\global\long\def\seinf{s(E_{\infty})}%
\global\long\def\L{\mathcal{L}}%
\global\long\def\gflow#1#2{\varphi_{#2}(#1)}%
\global\long\def\S{\mathscr{S}}%
\global\long\def\Asm{{\rm (A)}}%
\global\long\def\Asmnb{{\rm A}}%
\global\long\def\Frep{F^{{\rm Rep}}}%
\global\long\def\s{\mathfrak{s}}%
\global\long\def\e{e}%
\global\long\def\EsN{E_{\star,N}}%

\theoremstyle{definition}
\newtheorem*{assumption}{Assumption}
\title{Convergence of the free energy for spherical spin glasses}

\author{Eliran Subag}
\address{\tiny{Eliran Subag, incumbent of the Skirball Chair for New Scientists, Department of Mathematics, Weizmann Institute of Science, Rehovot 76100, Israel.}}
\email{eliran.subag@weizmann.ac.il}

\thanks{This project has received funding from the Israel Science Foundation (Grant Agreement No. 2055/21). }

\begin{abstract}
We prove that the free energy of any spherical mixed $p$-spin model
converges as the dimension $N$ tends to infinity. While the convergence
is a consequence of the Parisi formula, the proof we give is independent
of the formula and uses the well-known Guerra-Toninelli interpolation
method. The latter was invented for models with Ising spins to prove
that the free energy is super-additive and therefore (normalized by
$N$) converges. In the spherical case, however, the configuration
space is not a product space and the interpolation cannot be applied
directly. We first relate the free energy on the sphere of dimension
$N+M$ to a free energy defined on the product of spheres in dimensions
$N$ and $M$ to which we then apply the interpolation method. This
yields an approximate super-additivity which is sufficient to prove
the convergence. 
\end{abstract}

\maketitle

\section{Introduction}

In this work we consider the spherical mixed $p$-spin spin glass
models. The limit of their free energy as the dimension tends to infinity
is given by the celebrated Parisi formula \cite{ParisiFormula,Parisi1980}
or its representation by Crisanti and Sommers \cite{Crisanti1992}.
The formula was proved by Talagrand \cite{Talag} after a breakthrough
by Guerra \cite{GuerraBound} for models with even interactions and
later extended to general mixtures by Chen \cite{Chen}, using the
Aizenman-Sims-Starr representation \cite{ASSscheme} and ultrametricity
\cite{MPSTV2,MPSTV1,ultramet}.

Our goal in this note is only to prove the convergence of the free
energy, but without relying on the heavy machinery which was developed
to prove the Parisi formula. One of our main motivations comes from
recent works \cite{TAPChenPanchenkoSubag,TAPIIChenPanchenkoSubag,FElandscape,SubagTAPpspin}
on the Thouless-Anderson-Palmer (TAP) approach \cite{TAP}. In \cite{FElandscape}
we proved a generalized TAP representation for the free energy of
the spherical models and that for the maximal multi-samplable overlap
the correction term in the representation coincides with the classical
Onsager correction. Importantly, the proof of those results was independent
of the Parisi formula, but for the latter result on the Onsager correction
we had to assume that the free energy converges.\footnote{The statement of the TAP representation in \cite{FElandscape} also
assumes the convergence of the free energy, as it relates the $N\to\infty$
limit of several quantities including the free energy itself. An equivalent
way to phrase the representation is that the difference of the pre-limits
of both sides of it tends to zero as $N\to\infty$. This statement
can be proved by a slight modification of the existing proof in \cite{FElandscape}
and it does not require each of the terms, and the free energy in particular,
to have a limit.} In another work \cite{SubagTAPpspin}, we used the TAP representation
to compute the free energy of the spherical pure $p$-spin models
from the generalized TAP representation of \cite{FElandscape}, also
there assuming the convergence of the free energy. Our main result
in the current paper fills the gap and removes those assumptions from
\cite{FElandscape,SubagTAPpspin} without appealing to the Parisi
formula. 

For models with Ising spins, defined on the hyper-cube $\Sigma_{N}=\{\pm1\}^{N}$,
the convergence of the free energy was proved by Guerra and Toninelli
\cite{GuerraToninelli} who invented a simple, yet ingenious, interpolation
technique to show that the (unnormalized) free energy is superadditive.
The convergence immediately follows from superadditivity by invoking
Fekete's Lemma. The argument of \cite{GuerraToninelli} exploits the
fact that the configuration space $\Sigma_{N+M}=\Sigma_{N}\times\Sigma_{M}$
in dimension $N+M$ is equal to the product of the configuration space
in dimensions $N$ and $M$. In the spherical setting, this is no
longer the case, and the method of \cite{GuerraToninelli} cannot
be adapted directly. In our proof we therefore first relate the free
energy in dimension $N+M$ to another free energy defined on the product
of the spherical configuration space in dimensions $N$ and $M$ (using
the Hamiltonian in dimension $N+M$), to which we will be able to
apply the Guerra-Toninelli interpolation technique.

The spherical mixed $p$-spin spin glass model is defined as follows.
Suppose that $\gamma_{p}\geq0$ is a sequence such that $\sum_{p=1}^{\infty}\gamma_{p}^{2}(1+\epsilon)^{p}<\infty$
for small enough $\epsilon>0$. The mixed $p$-spin \emph{Hamiltonian
}corresponding to the \emph{mixture} $\xi(t)=\sum_{p\geq1}\gamma_{p}^{2}t^{p}$
is the random function on the sphere 
\begin{equation}
S_{N}:=\{\bs=(\sigma_{1},\ldots,\sigma_{N})\in\R^{N}:\,\|\bs\|=\sqrt{N}\},\label{eq:SN}
\end{equation}
given by 
\begin{equation}
H_{N}(\bs)=\sum_{p=1}^{\infty}\gamma_{p}N^{-\frac{p-1}{2}}\sum_{i_{1},\dots,i_{p}=1}^{N}J_{i_{1},\dots,i_{p}}\sigma_{i_{1}}\cdots\sigma_{i_{p}},\label{eq:Hamiltonian}
\end{equation}
where $J_{i_{1},\dots,i_{p}}$ are i.i.d. standard normal variables.
An easy calculation shows that the covariance function of the centered
Gaussian field $H_{N}(\bs)$ is 
\[
\E H_{N}(\bs)H_{N}(\bs')=N\xi(R(\bs,\bs')),
\]
where $R(\bs,\bs'):=\frac{1}{N}\bs\cdot\bs':=\frac{1}{N}\sum_{i\leq N}\sigma_{i}\sigma_{i}'$
is called the overlap of $\bs$ and $\bs'$.

The free energy is defined by 
\begin{equation}
F_{N}:=\frac{1}{N}\E\log\int_{S_{N}}e^{H_{N}(\bs)}d\mu_{N}(\bs),\label{eq:Fbeta-1}
\end{equation}
where $\mu_{N}$ is the uniform measure on $S_{N}$. The following
is our main result. 
\begin{thm}
\label{thm}$F_{N}$ converges as $N\to\infty$.
\end{thm}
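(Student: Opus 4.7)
Every $\bs \in S_{N+M}$ can be written uniquely as $\bs = (\sqrt{a}\,\br, \sqrt{b}\,\btau)$ with $\br \in S_N$, $\btau \in S_M$, and $aN + bM = N+M$. Setting $u := aN/(N+M)$, the uniform measure on $S_{N+M}$ disintegrates as
\begin{equation*}
d\mu_{N+M}(\bs) = p(u)\, du\, d\mu_N(\br)\, d\mu_M(\btau),
\end{equation*}
where $p$ is the $\mathrm{Beta}(N/2, M/2)$ density, concentrated around $u_0 := N/(N+M)$ (where $a = b = 1$) with fluctuations of order $(N+M)^{-1/2}$. Defining
\begin{equation*}
\tilde Z_{N,M} := \int_{S_N \times S_M} e^{H_{N+M}(\br, \btau)} d\mu_N(\br)\, d\mu_M(\btau),
\end{equation*}
I would prove $(N+M) F_{N+M} \geq \E\log \tilde Z_{N,M} - o(N+M)$, which is the only direction needed for the super-additivity argument below. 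The key ingredient is that, using the covariance formula $\E H_{N+M}(\bs) H_{N+M}(\bs') = (N+M)\xi(\bs \cdot \bs'/(N+M))$, the increment $H_{N+M}(\sqrt{a}\br, \sqrt{b}\btau) - H_{N+M}(\br, \btau)$ has pointwise variance of order $(u-u_0)^2 (N+M)^3/(NM)$ and, by a chaining bound combined with Borell--TIS, sup-norm over $S_N \times S_M$ of order $(N+M)|u-u_0|$. Restricting the $u$-integration to a neighborhood $|u - u_0| \leq (N+M)^{-1/4}$, which already captures nearly all the Beta mass, yields $Z_{N+M} \geq e^{-o(N+M)} \tilde Z_{N,M}$.

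\textbf{Step 2 (Guerra--Toninelli interpolation on $S_N \times S_M$).} Let $H_N, H_M, H_{N+M}$ be independent Gaussian fields, and interpolate
\begin{equation*}
H_t(\br, \btau) := \sqrt{t}\, H_{N+M}(\br, \btau) + \sqrt{1-t}\,\bigl(H_N(\br) + H_M(\btau)\bigr),\quad \phi(t) := \E \log \int_{S_N \times S_M} e^{H_t} d\mu_N d\mu_M.
\end{equation*}
Then $\phi(0) = N F_N + M F_M$ (by independence of $H_N, H_M$) and $\phi(1) = \E\log \tilde Z_{N,M}$. A standard Gaussian integration by parts, using that the covariances $(N+M)\xi((\br\cdot\br' + \btau\cdot\btau')/(N+M))$ and $N\xi(R(\br,\br')) + M\xi(R(\btau,\btau'))$ agree on the diagonal, gives
\begin{equation*}
\phi'(t) = -\tfrac{1}{2}\E\Bigl\langle (N+M)\xi\Bigl(\tfrac{N R_N + M R_M}{N+M}\Bigr) - N\xi(R_N) - M\xi(R_M) \Bigr\rangle_t,
\end{equation*}
where $R_N := R(\br_1,\br_2)$, $R_M := R(\btau_1,\btau_2)$ are overlaps of two replicas. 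Jensen's inequality applied to $\xi$ (convex on $[0,1]$) shows the bracket is $\leq 0$ for overlaps in $[0,1]^2$; the contribution from odd-$p$ terms where $\xi$ is not globally convex on $[-1,1]$ must be absorbed into an $o(N+M)$ error by a bounded perturbation or symmetrization argument. Integrating in $t$ then yields $\E\log \tilde Z_{N,M} \geq N F_N + M F_M - o(N+M)$.

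\textbf{Step 3 (approximate Fekete).} Combining Steps 1 and 2 gives the approximate super-additivity
\begin{equation*}
(N+M) F_{N+M} \geq N F_N + M F_M - o(N+M),
\end{equation*}
as $\min(N,M) \to \infty$. Since $F_N$ is uniformly bounded ($0 \leq F_N \leq \tfrac{1}{2}\xi(1)$ by Jensen and the annealed inequality), a standard extension of Fekete's lemma to the approximate-super-additive setting yields convergence of $F_N$.

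\textbf{Main obstacle.} I expect the principal difficulty to lie in Step 1: the natural fluctuation scale of the Beta density is $(N+M)^{-1/2}$, while the sup-norm of the Hamiltonian increment between nearby slices grows like $(N+M)|u-u_0|$, so these two scales must be balanced carefully. Making the Borell--TIS-type sup bound uniform over $(\br,\btau) \in S_N \times S_M$ and combining it with Laplace-style bookkeeping on the $u$-integration so that the error is genuinely sub-extensive is the main technical task; handling the non-convexity of $\xi$ in Step 2 is a secondary but non-trivial issue that the approximate (rather than exact) super-additivity is designed to accommodate.
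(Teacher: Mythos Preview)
Your three-step strategy---reduce to the product $S_N\times S_M$, run the Guerra--Toninelli interpolation there, then invoke an approximate Fekete lemma---is exactly the paper's. But two of the steps have genuine gaps that the paper fills with specific ideas you have not identified.

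In Step~1 your sup-norm estimate for the increment is too optimistic when $N$ and $M$ are not comparable. On any window in $r=\|\btau\|$ around $\sqrt M$ that carries a nonvanishing fraction of the sliced volume, the displacement in the $\btau$-direction is of order~$1$; a naive Lipschitz bound then multiplies this by $L^{(1)}\asymp\sqrt{N+M}$ and produces an error of order $\sqrt{N+M}$, which is \emph{not} $o(M)$ after sending $N\to\infty$ with $M$ fixed---precisely the regime the Fekete iteration actually uses. (Your window $|u-u_0|\le(N+M)^{-1/4}$ makes things worse, since for fixed $M$ it lets $|r-\sqrt M|$ grow like $N^{3/4}/\sqrt M$.) The paper's key device, which your sketch lacks, is to split $S_N\times S_M$ into the two halves $D^{\pm}$ according to the sign of the radial $\btau$-derivative of $\bar H_{N+M}$. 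On $D^{+}$, pushing $r$ above $\sqrt M$ costs only the \emph{second-order} quantity $L^{(2)}|r-\sqrt M|^2=O(1)$ (since $\E L^{(2)}=O(1)$), while the $\br$-displacement $|\eta(r)-\sqrt N|=O(\sqrt M/\sqrt N)$ is small enough that the first-order bound $L^{(1)}|\eta(r)-\sqrt N|=O(\sqrt M)$ suffices there; and deterministically one of $D^{\pm}$ carries at least half of $X_{N,M}(\sqrt M)$. This is what makes the Step~1 error $O(\sqrt M)$ uniformly in $N$. In Step~2, ``a bounded perturbation or symmetrization argument'' is too vague to be a proof: the paper adds the standard perturbation $s_N g_N(\bs)$ (working with $\bar F_N$ rather than $F_N$ throughout) and invokes Talagrand's positivity principle to force both overlaps $R_N,R_M\geq -\epsilon$ with high Gibbs probability, uniformly in the interpolation parameter $t$ and in the other dimension; only then does convexity of $\xi$ on $[0,1]$ yield $\phi'(t)\geq -o(M)$.
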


At the very last step of the proof of Theorem \ref{thm}, after we
apply the Guerra-Toninelli interpolation, we will need to invoke Talagrand's
positivity principle \cite{TalagBook2003} in order to restrict to
overlap values in $[0,1]$ (on which $\xi(t)$ is convex). The positivity
principle applies to general mixtures, if we add a certain perturbation
to the Hamiltonian.\footnote{For mixtures such that $\xi(t)$ is convex on $[-1,1]$ this step
is not required and one can work with the original Hamiltonian without
adding a perturbation.} We will give the precise definition of the perturbed Hamiltonian
$\bar{H}_{N}(\bs)$ and its associated free energy $\bar{F}_{N}$,
which satisfies
\begin{equation}
\lim_{N\to\infty}|F_{N}-\bar{F}_{N}|=0,\label{eq:Fbd}
\end{equation}
in a moment. Before that, we state the following approximate superadditivity
of $\bar{F}_{N}$ and observe how the convergence of $F_{N}$ follows
from it. 
\begin{prop}
\label{prop}For any $N$ and $M$, 
\[
(N+M)\bar{F}_{N+M}\geq N\bar{F}_{N}+M\bar{F}_{M}-C_{N,M},
\]
for some numbers $C_{N,M}$ such that 
\[
\limsup_{M\to\infty}\limsup_{N\to\infty}\frac{C_{N,M}}{M}=0.
\]
\end{prop}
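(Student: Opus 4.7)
The plan follows the two-step strategy previewed in the introduction: first relate $\bar F_{N+M}$ to an auxiliary ``product'' free energy $\bar F^{\mathrm{prod}}_{N,M}$ defined on $S_N\times S_M$ via the Hamiltonian $\bar H_{N+M}$, then apply the Guerra--Toninelli interpolation on the product. For the first step, decompose $\bs = (\bs^1,\bs^2) \in S_{N+M}$ into blocks and parametrize by $q := \|\bs^1\|^2/(N+M) \in [0,1]$: write $\bs^i = \rho_i(q)\btau_i$ with $\btau_1 \in S_N$, $\btau_2 \in S_M$, $\rho_1(q) := \sqrt{q(N+M)/N}$ and $\rho_2(q) := \sqrt{(1-q)(N+M)/M}$, so that $\rho_1(q_\ast) = \rho_2(q_\ast) = 1$ at $q_\ast := N/(N+M)$. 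The co-area formula gives
\[
Z_{N+M} = \int_0^1 p(q)\, Z^{\mathrm{prod}}(\rho_1(q), \rho_2(q))\, dq,\qquad Z^{\mathrm{prod}}(\rho_1, \rho_2) := \int_{S_N \times S_M} e^{\bar H_{N+M}(\rho_1\btau_1, \rho_2\btau_2)}\, d\mu_N d\mu_M,
\]
with $p$ a $\mathrm{Beta}(N/2, M/2)$ density. Jensen's inequality against $p(q)\,dq$ yields $\E\log Z_{N+M} \geq \int p(q) \E\log Z^{\mathrm{prod}}(\rho_1(q), \rho_2(q))\,dq$. For each $q$ the Gaussian field $\bar H_{N+M}(\rho_1(q)\btau_1, \rho_2(q)\btau_2)$ has the same marginal variance $(N+M)\xi(1)$ as the $q = q_\ast$ field, while its covariance kernel differs pointwise by at most $2L(N+M)|q - q_\ast|$ (with $L$ the Lipschitz constant of $\xi$ on $[-1,1]$), since $|\rho_i(q)^2 - 1| \le (N+M)|q-q_\ast|/(i\text{th block size})$. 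A Gaussian interpolation in the covariance --- the same trick as Guerra--Toninelli --- gives $|\E\log Z^{\mathrm{prod}}(\rho_1(q), \rho_2(q)) - \E\log Z^{\mathrm{prod}}(1,1)| \le L(N+M)|q - q_\ast|$, and integrating against $p$ and using $\E_p|q-q_\ast| \le \sqrt{\mathrm{Var}_p(q)} = O(\sqrt{NM}/(N+M)^{3/2})$ gives a total error $O(\sqrt{NM/(N+M)}) = O(\sqrt M)$. Thus $(N+M)\bar F_{N+M} \geq (N+M)\bar F^{\mathrm{prod}}_{N,M} - O(\sqrt M)$, where $\bar F^{\mathrm{prod}}_{N,M} := (N+M)^{-1}\E\log Z^{\mathrm{prod}}(1,1)$.

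For the second step, let $\bar H_N, \bar H_M$ be independent copies on $S_N, S_M$ (with the analogous perturbation scheme) and interpolate
\[
H_t(\btau_1, \btau_2) := \sqrt t\, \bar H_{N+M}(\btau_1, \btau_2) + \sqrt{1-t}\,\bigl(\bar H_N(\btau_1) + \bar H_M(\btau_2)\bigr), \quad t \in [0,1].
\]
Set $\phi(t) := \E\log\int_{S_N\times S_M} e^{H_t}\,d\mu_N d\mu_M$, so that $\phi(0) = N\bar F_N + M\bar F_M$ and $\phi(1) = (N+M)\bar F^{\mathrm{prod}}_{N,M}$. A standard Gaussian integration-by-parts computation, with self-overlap terms cancelling since $(N+M)\xi(1) = N\xi(1) + M\xi(1)$, produces
\[
\phi'(t) = -\tfrac12\E\bigl\langle (N+M)\xi\bigl(\tfrac{NR_1 + MR_2}{N+M}\bigr) - N\xi(R_1) - M\xi(R_2)\bigr\rangle_t,
\]
where $R_1, R_2$ denote the two-replica block overlaps under $\langle \cdot\rangle_t$. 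Because $\xi$ has non-negative coefficients it is convex on $[0,1]$, and the weighted Jensen inequality with weights $N/(N+M), M/(N+M)$ makes the bracket non-positive whenever $R_1, R_2 \in [0,1]$, rendering that part of $\phi'(t)$ non-negative.

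The complementary contribution from $\{R_1 \notin [0,1]\} \cup \{R_2 \notin [0,1]\}$ is bounded crudely by $\tfrac{3}{2}(N+M)\xi(1)\,\P_{\mathrm{bad}}(t)$, with $\P_{\mathrm{bad}}(t)$ the two-replica Gibbs probability that some block overlap is negative. The perturbation built into $\bar H_N, \bar H_M, \bar H_{N+M}$ --- possibly augmented by an additional block-wise perturbation inserted into $H_t$ and absorbed into $\bar H_N, \bar H_M$ via \eqref{eq:Fbd} --- is precisely what allows invocation of Talagrand's positivity principle \cite{TalagBook2003} to force $\sup_{t \in [0,1]}\P_{\mathrm{bad}}(t) \to 0$ in the iterated limit $\limsup_M \limsup_N$. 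Integrating $\phi'$ on $[0,1]$ then yields $(N+M)\bar F^{\mathrm{prod}}_{N,M} \geq N\bar F_N + M\bar F_M - \varepsilon_{N,M}$ with $\varepsilon_{N,M}/M \to 0$, and combining with the first step gives the proposition with $C_{N,M} = O(\sqrt M) + \varepsilon_{N,M}$. The chief obstacle is quantitative: one must control $\P_{\mathrm{bad}}$ sharply enough that the factor $(N+M)/M$ appearing when we divide by $M$ does not spoil the iterated limit, which is why the specific perturbation used to define $\bar H_N$ (to be introduced shortly) must be tailored to deliver a block-wise positivity statement uniform in $t$.
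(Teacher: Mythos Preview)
Your two-step outline matches the paper's. Your Step~1 is in fact a cleaner alternative: the paper proves Lemma~\ref{lem:coarea} by splitting $S_N\times S_M$ into two halves $D^{\pm}_{N,M}$ according to the sign of a radial derivative of $\bar H_{N+M}$, then controlling the change in the integrand along the $r$-direction via pathwise Lipschitz and second-derivative bounds (Borell--TIS). Your route---Jensen against the Beta density followed by a covariance-interpolation comparison of $Z^{\mathrm{prod}}(\rho_1(q),\rho_2(q))$ with $Z^{\mathrm{prod}}(1,1)$---is correct, shorter, and uses only the covariance structure rather than sample-path regularity; it yields the same $O(\sqrt M)$ error.

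Step~2, however, has a real gap. You bound the ``bad'' contribution by $\tfrac32(N{+}M)\xi(1)\,\P_{\mathrm{bad}}$ and then flag the factor $(N{+}M)/M$ as the chief obstacle, speculating that the perturbation must be tailored to force $\P_{\mathrm{bad}}$ to vanish fast enough. The resolution is much simpler and you have missed it: the integrand $U_{N,M}=(N{+}M)\xi(R_{1,2})-N\xi(R_1)-M\xi(R_2)$ is already $O(M)$ uniformly, not $O(N{+}M)$. Indeed $|R_{1,2}-R_1|=\tfrac{M}{N+M}|R_2-R_1|\le \tfrac{2M}{N+M}$, so
\[
|U_{N,M}|\le M|\xi(R_{1,2})|+N|\xi(R_{1,2})-\xi(R_1)|+M|\xi(R_2)|\le 2M\bigl(\xi(1)+\xi'(1)\bigr).
\]
Thus the bad contribution is $O(M)\,\P_{\mathrm{bad}}$, and you only need $\P_{\mathrm{bad}}\to0$ with no rate, which is exactly what Talagrand's positivity principle delivers. (The paper also uses this $|R_{1,2}-R_1|=O(M/(N{+}M))$ observation a second time, to replace $U_{N,M}$ by its truncation $U_{N,M}^+$ at overlaps $\ge0$ with an $O(\epsilon_M M)$ error.)

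A secondary issue: by interpolating the full perturbed Hamiltonians $\bar H_{N+M}$ and $\bar H_N+\bar H_M$, your $\phi'$ formula is not literally what you wrote (the self-overlap terms of the perturbations do \emph{not} cancel, and the off-diagonal picks up extra $\eta$-terms), and the block perturbations carry a factor $\sqrt{1-t}$ that threatens the uniform-in-$t$ application of positivity near $t=1$. The paper handles this by first swapping the $(N{+}M)$-level perturbation $s_{N+M}g_{N+M}$ for fixed block-wise perturbations $s_Ng_N^x+s_Mg_M^y$---again an $o(M)$ error, proved via the same $|R_{1,2}-R_1|=O(M/(N{+}M))$ trick---and then interpolating only the unperturbed $H_{N+M}$ against $H_N+H_M$ with the block perturbations held fixed. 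This makes both the derivative formula and the positivity argument clean for every $t$.
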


\begin{proof}
[Proof of Theorem \ref{thm}] In light of (\ref{eq:Fbd}), it is enough
to show that $\bar{F}_{N}$ converges. By induction on $k$, for any
$N'$, $M$ and $k$,
\[
(N'+kM)\bar{F}_{N'+kM}\geq N'\bar{F}_{N'}+kM\bar{F}_{M}-\sum_{i=0}^{k-1}C_{N'+iM,M}.
\]

Let $\delta>0$ be an arbitrary number. Choose some large $M$ such
that 
\[
\bar{F}_{M}>\limsup_{N\to\infty}\bar{F}_{N}-\delta\quad\text{and}\quad\limsup_{N\to\infty}\frac{C_{N,M}}{M}<\delta.
\]

Given some $N$, let $N'\in\{0,1,\ldots,M-1\}$ and $k\geq0$ be the
integers such that $N=N'+kM$. Then by dividing both sides of the
inequality above by $N$ and taking limits we obtain that with $M$
fixed,
\[
\liminf_{N\to\infty}\bar{F}_{N}\geq\bar{F}_{M}-\limsup_{N\to\infty}\frac{C_{N,M}}{M}\geq\limsup_{N\to\infty}\bar{F}_{N}-2\delta.
\]
Since $\delta>0$ is arbitrary, $\bar{F}_{N}$ converges and the theorem
follows.
\end{proof}
We now turn to the definition of the perturbed Hamiltonian $\bar{H}_{N}(\bs)$,
which we take from Section 3.2 of \cite{PanchenkoBook}. Let $H_{N,p}(\bs)$
denote the pure $p$-spin Hamiltonian with mixture $\xi(t)=t^{p}$.
For $p\geq1$, let $g_{N,p}(\bs)$ be a sequence of Hamiltonians such
that $g_{N,p}(\bs)=\frac{1}{\sqrt{N}}H_{N,p}(\bs)$ in distribution.
Let $x_{p}$ be i.i.d. random variables uniform on $[1,2]$. Assume
that $g_{N,p}(\bs)$ and $x_{p}$ are independent of each other and
everything else. Set $s_{N}=N^{c}$ for some $c\in(1/4,1/2)$, which
we now fix once and for all. Finally, define
\begin{equation}
g_{N}(\bs)=\sum_{p=1}^{\infty}2^{-p}x_{p}g_{N,p}(\bs)\quad\text{and}\quad\bar{H}_{N}(\bs)=H_{N}(\bs)+s_{N}g_{N}(\bs).\label{eq:Hbar}
\end{equation}
We define the free energy $\bar{F}_{N}$ from Proposition \ref{prop}
by
\begin{equation}
\bar{F}_{N}:=\frac{1}{N}\E\log\int_{S_{N}}e^{\bar{H}_{N}(\bs)}d\mu_{N}(\bs),\label{eq:Ftilde}
\end{equation}
where the expectation is also w.r.t. the randomness of the uniform
variables $x_{p}$. The choice of $s_{N}$ as above implies (\ref{eq:Fbd}),
see \cite{PanchenkoBook}. 

The proof of Proposition \ref{prop} will consist of two steps, stated
in the lemmas below. The first will be to relate the free energy in
dimension $N+M$ to another free energy defined on the product space
$S_{N}\times S_{M}\subset S_{N+M}$. By an abuse of notation we write
$\bar{H}_{N+M}(\br,\btau)$ for $\bar{H}_{N+M}((\br,\btau))$ where
$(\br,\btau)\in S_{N+M}$ denotes the vector obtained by concatenating
$\br\in S_{N}$ and $\btau\in S_{M}$.
\begin{lem}
\label{lem:coarea}For any $N$ and $M$, 
\begin{equation}
(N+M)\bar{F}_{N+M}\geq\E\log\int_{S_{N}\times S_{M}}e^{\bar{H}_{N+M}(\br,\btau)}d\mu_{N}\times\mu_{M}(\br,\btau)-C_{N,M}',\label{eq:lemcoarea}
\end{equation}
for some numbers $C_{N,M}'$ such that 
\[
\limsup_{M\to\infty}\limsup_{N\to\infty}\frac{C_{N,M}'}{\sqrt{M}}=C(\xi)
\]
where $C(\xi)$ is a constant which depends only on $\xi$. 
\end{lem}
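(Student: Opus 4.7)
I would decompose $S_{N+M}$ along the coarea fibration over the latitude $a:=\|\bx\|$ of the first $N$ coordinates, then use Jensen's inequality to pull expectation inside the $a$-integral, and finally compare the resulting slice integrals via a Gaussian interpolation.

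\emph{Coarea decomposition and Jensen.} Each level set $\{a=\text{const}\}$ in $S_{N+M}$ is the product of spheres $S^{N-1}(a)\times S^{M-1}(b)$ with $b:=\sqrt{N+M-a^2}$, parameterized by $(\br,\btau)\in S_N\times S_M$ via $\bs=(u\br,v\btau)$, $u:=a/\sqrt N$, $v:=b/\sqrt M$. The coarea formula then yields
\begin{equation*}
\int_{S_{N+M}} e^{\bar H_{N+M}}d\mu_{N+M}=\int_0^{\sqrt{N+M}} p(a)\,\mathcal{I}(a)\,da,
\end{equation*}
where $\mathcal{I}(a):=\int_{S_N\times S_M} e^{\bar H_{N+M}(u\br,v\btau)}d\mu_N\times d\mu_M$ and $p(a)$ is the density of $a$ under $\mu_{N+M}$; a standard moment calculation (using that $a^2/(N+M)$ is $\mathrm{Beta}(N/2,M/2)$-distributed) gives $\int p(a)(a^2-N)^2\,da=2NM/(N+M+2)$. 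Applying Jensen's inequality (for the probability measure $p(a)\,da$ and concave $\log$) pointwise in the disorder, then taking expectation,
\begin{equation*}
(N+M)\bar F_{N+M}\geq \int_0^{\sqrt{N+M}} p(a)\,\E\log\mathcal{I}(a)\,da.
\end{equation*}

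\emph{Gaussian interpolation between slices.} For fixed $a$, the Gaussian field $Y(\br,\btau):=\bar H_{N+M}(u\br,v\btau)$ on $S_N\times S_M$ has the same variance at every point as $X(\br,\btau):=\bar H_{N+M}(\br,\btau)$, because $(u\br,v\btau)\in S_{N+M}$. Their covariances at two configurations with $S_N$- and $S_M$-overlaps $R,R'$ differ by $(N+M)[\xi(T_1)-\xi(T_3)]$ plus a perturbation term of size $s_{N+M}^2\,|T_1-T_3|$, where $T_3:=(NR+MR')/(N+M)$ and $T_1:=(u^2NR+v^2MR')/(N+M)$. The constraint $u^2N+v^2M=N+M$ collapses this to
\begin{equation*}
T_1-T_3=(a^2-N)(R-R')/(N+M),
\end{equation*}
so $|T_1-T_3|\leq 2|a^2-N|/(N+M)$. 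Running the standard Gaussian interpolation between $X$ and an independent copy of $Y$, the on-diagonal terms cancel (variances agree), and the derivative of the interpolating free energy is bounded by $\tfrac12\sup|\Delta C|\leq \xi'(1)|a^2-N|(1+o(1))$, the perturbation contributing $o(1)$ because $s_{N+M}^2/(N+M)=(N+M)^{2c-1}\to 0$. Hence
\begin{equation*}
|\E\log\mathcal{I}(a)-\E\log\mathcal{I}(\sqrt N)|\leq \xi'(1)|a^2-N|(1+o(1)).
\end{equation*}

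\emph{Conclusion and main obstacle.} Integrating this bound against $p(a)\,da$ and applying Cauchy--Schwarz,
\begin{equation*}
\int p(a)|a^2-N|\,da\leq \sqrt{\int p(a)(a^2-N)^2\,da}=\sqrt{2NM/(N+M+2)},
\end{equation*}
so the lemma holds with $C_{N,M}'\leq \xi'(1)\sqrt{2NM/(N+M+2)}(1+o(1))$, and dividing by $\sqrt M$ and letting $N\to\infty$ gives $\limsup_{N\to\infty}C_{N,M}'/\sqrt M\leq \sqrt 2\,\xi'(1)$, uniform in $M$. The delicate point is the cancellation of on-diagonal terms in the Gaussian interpolation: the equality of variances of $X$ and $Y$---which holds only because $(u\br,v\btau)$ lies on $S_{N+M}$---is essential, for without it the diagonal contribution would be of size $N+M$ and produce an error of that order rather than the required $\sqrt M$. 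The remaining ingredients (coarea Jacobian, Beta concentration of $a$, perturbation estimate) are routine.
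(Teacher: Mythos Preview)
Your argument is correct and follows a genuinely different route from the paper's proof.

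The paper proceeds pathwise: after the coarea slicing it restricts to a small interval $I$ around $\sqrt{M}$, splits the product sphere into regions $D^{\pm}$ according to the sign of the radial derivative of $\bar H_{N+M}$, and uses uniform Lipschitz and second-derivative bounds on the Hamiltonian (controlled by Borell--TIS) to compare the random slice integrals $X_{N,M}^{\pm}(r)$ to $X_{N,M}^{\pm}(\sqrt{M})$. The $D^{+}/D^{-}$ trick is what prevents the first-order term in the Taylor expansion from costing a full factor of $|r-\sqrt{M}|$ on the whole integral.

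You instead pass to expected free energies immediately via Jensen, which replaces the pathwise comparison by a comparison of $\E\log\mathcal I(a)$ across slices; this you handle by a Gaussian interpolation between the fields $\bar H_{N+M}(\br,\btau)$ and $\bar H_{N+M}(u\br,v\btau)$. Your key observation---that $(u\br,v\btau)$ still lies on $S_{N+M}$, so the two fields have identical variances and the diagonal term in the interpolation cancels---is exactly what makes the off-diagonal remainder of order $|a^2-N|$ rather than $N+M$. Integrating this against the Beta density and using Cauchy--Schwarz with $\mathrm{Var}(a^2)=2NM/(N+M+2)$ gives the required $O(\sqrt{M})$ error. The perturbation is harmless since $s_{N+M}^2/(N+M)\to 0$.

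Your approach is shorter and more robust: it needs only covariance information, not pointwise control of $\nabla\bar H$ and $\nabla^2\bar H$, and it sidesteps the $D^{\pm}$ decomposition entirely. The paper's pathwise bounds, on the other hand, give something stronger in principle (a random, not just averaged, comparison of slice integrals), though that extra strength is not used here.
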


We remark that the perturbation has no role in the proof the lemma
above, and it still holds also if we work with the unperturbed Hamiltonian
$H_{N}$ and free energy $F_{N}$. The free energy in the right-hand
side of (\ref{eq:lemcoarea}) is defined on a product space. We will
therefore be able to apply to it the Guerra-Toninelli interpolation
and obtain the following lemma. Its proof is where we will use Talagrand's
positivity principle.
\begin{lem}
\label{lem:subadd}For any $N$ and $M$, 
\begin{equation}
\E\log\int_{S_{N}\times S_{M}}e^{\bar{H}_{N+M}(\br,\btau)}d\mu_{N}\times\mu_{M}(\br,\btau)\geq N\bar{F}_{N}+M\bar{F}_{M}-C_{N,M},\label{eq:NM}
\end{equation}
for some numbers $C_{N,M}$ as in Proposition \ref{prop}.
\end{lem}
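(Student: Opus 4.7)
The plan is to apply the Guerra--Toninelli interpolation, which is now directly available because the configuration space on the right-hand side of \eqref{eq:NM} is the product $S_N\times S_M$. Let $\bar{H}_{N+M}$, $\bar{H}'_N$, and $\bar{H}''_M$ be Hamiltonians as in \eqref{eq:Hbar}, drawn with independent Gaussian disorder and coupled so that the same uniform variables $(x_p)_{p\ge1}$ enter the perturbations of all three; this coupling does not alter the marginal distribution of any one of them and therefore preserves $\bar{F}_N$ and $\bar{F}_M$. Set
\[
H_t(\br,\btau)=\sqrt{t}\,\bar{H}_{N+M}(\br,\btau)+\sqrt{1-t}\,\bigl(\bar{H}'_N(\br)+\bar{H}''_M(\btau)\bigr),\qquad t\in[0,1],
\]
and $\phi(t)=\E\log\int_{S_N\times S_M}e^{H_t(\br,\btau)}\,d\mu_N\times\mu_M$. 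Then $\phi(1)$ is the left-hand side of \eqref{eq:NM} and $\phi(0)=N\bar{F}_N+M\bar{F}_M$ by factorization, so the lemma reduces to proving $\phi(1)-\phi(0)\ge -C_{N,M}$.

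Gaussian integration by parts (performed after conditioning on the $x_p$) yields $\phi'(t)=\tfrac12 A-\tfrac12\,\E\langle B\rangle_t$, where, writing $\nu(q;x)=\sum_{p\ge1}4^{-p}x_p^2 q^p$, $R_N=\br\cdot\br'/N$ and $R_M=\btau\cdot\btau'/M$ for the two-replica overlaps on the two factors, and $Q=(NR_N+MR_M)/(N+M)$ for the induced overlap on $S_{N+M}$,
\begin{align*}
A&=(s_{N+M}^2-s_N^2-s_M^2)\,\E\nu(1;x),\\
B&=(N+M)\xi(Q)-N\xi(R_N)-M\xi(R_M)+s_{N+M}^2\nu(Q;x)-s_N^2\nu(R_N;x)-s_M^2\nu(R_M;x).
\end{align*}
Since $s_N=N^c$ with $c\in(1/4,1/2)$, the concavity of $x\mapsto x^{2c}$ gives $s_{N+M}^2\le s_N^2+s_M^2$, so $A\le 0$; moreover $s_N^2+s_M^2-s_{N+M}^2\to M^{2c}$ as $N\to\infty$ with $M$ fixed, so the diagonal contribution to $C_{N,M}$ is of order at most $M^{2c}$, which divided by $M$ gives $M^{2c-1}\to0$ as $M\to\infty$.

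For $B$, on the event $\{R_N,R_M\in[0,1]\}$ one has $Q\in[0,1]$, and both $\xi$ and $\nu(\cdot;x)$ are convex there, being polynomials with nonnegative coefficients. Jensen's inequality applied to the convex combination $Q=\tfrac{N}{N+M}R_N+\tfrac{M}{N+M}R_M$, combined with the scaling inequality $\tfrac{N}{N+M}s_{N+M}^2\le s_N^2$ (equivalent to $(N+M)^{2c-1}\le N^{2c-1}$, which holds since $2c-1<0$) and its symmetric counterpart, yields $B\le 0$ on this event. On the complementary event, the Lipschitz continuity of $\xi$ and $\nu(\cdot;x)$ on $[-1,1]$ together with $|Q-R_N|\le2M/(N+M)$ and $|Q-R_M|\le2N/(N+M)$ gives, after the short decomposition $s_{N+M}^2\nu(Q;x)-s_N^2\nu(R_N;x)-s_M^2\nu(R_M;x)=(s_{N+M}^2-s_N^2-s_M^2)\nu(Q;x)+s_N^2[\nu(Q;x)-\nu(R_N;x)]+s_M^2[\nu(Q;x)-\nu(R_M;x)]$, the bound $|B|\le C\cdot NM/(N+M)+O(M^{2c})\le C'M$ for $N\ge M$, with $C'$ depending only on $\xi$. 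Hence the bad event contributes at most $C'M\cdot\E\bigl\langle\indic\{R_N<0\}+\indic\{R_M<0\}\bigr\rangle_t$ to $-\phi'(t)$.

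The remaining step is to show that the $t$-integral of this contribution is $o(M)$ in the iterated limit. For every fixed $t\in[0,1)$, the Hamiltonian $H_t$ still contains the perturbations $\sqrt{1-t}\,s_N g'_N(\br)$ and $\sqrt{1-t}\,s_M g''_M(\btau)$ acting on the $S_N$ and $S_M$ factors separately, of effective scale $\sqrt{1-t}\,N^c$ and $\sqrt{1-t}\,M^c$. Applying Talagrand's positivity principle in the form given in \cite{PanchenkoBook} to the marginal Gibbs measures on $S_N$ and $S_M$ then yields $\E\langle\indic\{R_N<0\}\rangle_t+\E\langle\indic\{R_M<0\}\rangle_t\to0$ as $N,M\to\infty$ for each such $t$. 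Since these indicators are bounded by $1$, dominated convergence gives $\int_0^1\E\langle\indic\{R_N<0\}+\indic\{R_M<0\}\rangle_t\,dt\to0$ in the iterated limit, and combining this with the diagonal bound produces $C_{N,M}$ with the required growth. I expect the main obstacle to be the precise invocation of the positivity principle on the interpolated Gibbs measure with its $t$-dependent perturbation scale, which requires verifying the hypotheses of the principle in \cite{PanchenkoBook} for the effective marginal Hamiltonians at each $t<1$.
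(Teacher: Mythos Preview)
Your overall strategy---Guerra--Toninelli interpolation on the product space followed by Talagrand's positivity principle on each factor---is exactly the paper's. The execution differs in one structural choice, and that choice creates the gap you yourself flag at the end.

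The paper does \emph{not} interpolate the perturbed Hamiltonians directly. Instead it first pays an $o(M)$ cost to replace $\bar H_{N+M}$ by
\[
\tilde H_{N,M}^{x,y}(\br,\btau)=H_{N+M}(\br,\btau)+s_N g_N^{x}(\br)+s_M g_M^{y}(\btau),
\]
where the perturbations sit on the two factors separately, at full strength, and with \emph{independent} uniform parameters $x$ and $y$. The interpolation is then carried out only on the unperturbed part, $\sqrt{t}\,H_{N+M}+\sqrt{1-t}(H_N+H_M)$, while $s_N g_N^{x}(\br)+s_M g_M^{y}(\btau)$ stays fixed for all $t$. This buys three things at once: (i) $\varphi'(t)$ involves only $\xi$, so your $\nu$-bookkeeping disappears; (ii) the perturbation on $S_N$ has scale $s_N$ uniformly in $t$, so no dominated-convergence argument near $t=1$ is needed; and (iii) on the $S_N$ marginal one can write $\hat H_t(\br)=\hat H_t^{y}(\br)+s_N g_N^{x}(\br)$ with $\hat H_t^{y}$ independent of $x$, which is precisely the hypothesis of Theorem~3.4 in \cite{PanchenkoBook}.

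Your interpolation of the full $\bar H$'s with a \emph{shared} sequence $(x_p)$ breaks (iii): the ``main'' part of the marginal Hamiltonian on $S_N$ still depends on $x_p$ through $\sqrt{t}\,s_{N+M}g_{N+M}$ and $\sqrt{1-t}\,s_M g''_M$, so differentiating the free energy in $x_p$ no longer isolates the $S_N$ Ghirlanda--Guerra relations, and Theorem~3.4 does not apply as stated. The easy repair is to use three independent copies $x,x',x''$ of the uniform parameters in $\bar H_{N+M},\bar H'_N,\bar H''_M$; marginals are unchanged, and then conditioning on $(x,x'')$ restores the hypotheses of the positivity principle for the $S_N$ factor (and symmetrically for $S_M$). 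With that fix your dominated-convergence treatment of the $\sqrt{1-t}$ scale goes through, though the paper's device of keeping the perturbation outside the interpolation is cleaner and avoids both this issue and the $\nu$-terms altogether.
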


Proposition \ref{prop} directly follows from the two lemmas. The
rest of the paper consists of the proof of Lemmas \ref{lem:coarea} and \ref{lem:subadd} in Sections \ref{sec:lemCoarea} 
and \ref{sec:lemsubadd}, respectively.

\section{\label{sec:lemCoarea}Proof of Lemma \ref{lem:coarea}}

Fix some integers $N,M\geq1$. We will denote by $\nu_{d}$ the $d-1$
dimensional Hausdorff measure. In principle, it depends on the dimension
of the ambient space, but we will omit this from the notation. By
definition, 
\begin{equation}
(N+M)\bar{F}_{N+M}=\E\log\left(\frac{1}{\nu_{N+M}(S_{N+M})}\int_{S_{N+M}}e^{\bar{H}_{N+M}(\bs)}d\nu_{N+M}(\bs)\right).\label{eq:int1}
\end{equation}

Similarly to (\ref{eq:SN}), we will use the notation 
\[
S_{N}(r):=\{\bs=(\sigma_{1},\ldots,\sigma_{N})\in\R^{N}:\,\|\bs\|=r\}
\]
for the sphere of radius $r$ in $\R^{N}$.\textbf{\textcolor{red}{{}
}}Define the function 
\[
\eta(r)=\sqrt{N+M-r^{2}}.
\]
For two vectors $\br\in\R^{N}$ and $\btau\in\R^{M}$, we will denote
by $(\br,\btau)\in\R^{N+M}$ the vector obtained by concatenating
$\br$ and $\btau$. Given some $\bs\in S_{N+M}$, whenever we write
$\bs=(\br,\btau)$ it should be understood that $\br$ and $\btau$
are the projections of $\bs$ to $\R^{N}$ and $\R^{M}$.

Consider the mapping $\phi:S_{N+M}\to\R$, $\bs=(\br,\btau)\mapsto\|\btau\|$
and note that
\[
\phi^{-1}(r)=S_{N}(\eta(r))\times S_{M}(r).
\]
For $\bs=(\br,\btau)\in S_{N+M}$ such that $0<\|\btau\|<\sqrt{N+M}$,
the Jacobian of the differential $D_{\bs}\phi:T_{\bs}S_{N+M}\to T_{\|\btau\|}\R$
is equal to\footnote{Where on $S_{N+M}$ and $\phi^{-1}(r)$ we assume the Riemannian structure
induced by $\R^{N+M}$ which is compatible with the Hausdorff measures
on them.}
\[
\sqrt{\frac{N+M-\|\btau\|^{2}}{N+M}}=\frac{\eta(\|\btau\|)}{\sqrt{N+M}}.
\]

For an interval $I\subset[0,\sqrt{N+M}]$, consider the sub-manifold
\begin{align*}
S_{N+M}^{I} & :=\phi^{-1}(I)=\left\{ \bs=(\br,\btau)\in S_{N+M}:\,\|\btau\|\in I\right\} .
\end{align*}
By the coarea formula,
\[
\int_{S_{N+M}^{I}}e^{\bar{H}_{N+M}(\bs)}d\nu_{N+M}(\bs)=\int_{I}\frac{\sqrt{N+M}}{\eta(r)}X_{N,M}(r)dr,
\]
where we define

\[
X_{N,M}(r):=\int_{S_{N}(\eta(r))\times S_{M}(r)}e^{\bar{H}_{N+M}(\br,\btau)}d\nu_{N}\times\nu_{M}(\br,\btau).
\]

Applying the same argument to the constant function identically equal
to $1$ over $S_{N+M}$, we have that 
\begin{equation}
\begin{aligned} & \nu_{N+M}(S_{N+M}^{I})=\int_{S_{N+M}^{I}}d\nu_{N+M}(\bs)\\
 & =\int_{I}\frac{\sqrt{N+M}}{\eta(r)}\left(\frac{\eta(r)}{\sqrt{N}}\right)^{N-1}\left(\frac{r}{\sqrt{M}}\right)^{M-1}\nu_{N}(S_{N})\nu_{M}(S_{M})dr.
\end{aligned}
\label{eq:int2-2-1}
\end{equation}

Using the above and (\ref{eq:int1}), we have that
\begin{align}
 & (N+M)\bar{F}_{N+M}\geq\E\log\left(\frac{\nu_{N+M}(S_{N+M}^{I})}{\nu_{N+M}(S_{N+M})}\frac{\int_{S_{N+M}^{I}}e^{\bar{H}_{N+M}(\bs)}d\nu_{N+M}(\bs)}{\nu_{N+M}(S_{N+M}^{I})}\right)\nonumber \\
 & =\log\frac{\nu_{N+M}(S_{N+M}^{I})}{\nu_{N+M}(S_{N+M})}+\E\log\frac{\int_{I}\frac{\sqrt{N+M}}{\eta(r)}X_{N,M}(r)dr}{\nu_{N+M}(S_{N+M}^{I})}.\label{eq:E}
\end{align}

From now on, we will work with the interval 
\begin{equation}
I=I^{+}\cup I^{-}:=[\sqrt{M}-a,\sqrt{M}]\cup[\sqrt{M},\sqrt{M}+a],\label{eq:I}
\end{equation}
where $a\in(0,1)$ is some fixed number, the value of which will not
be important. We will show that the first term in (\ref{eq:E}) converges
to a constant and prove a lower bound for the second term, as $N\to\infty$
first and then $M\to\infty$.

The following quite elementary lemma is usually attributed to Poincar\'{e}
\cite{Poincare}.
\begin{lem}
[Poincar\'{e}'s limit]\label{lem:Poincare} Fix an integer $M\geq1$.
Suppose that $\bs=(\br,\btau)$ is a random point uniformly distributed
on $S_{N+M}$. As $N\to\infty$, the marginal distribution of $\btau$
weakly converges to the standard Gaussian distribution on $\R^{M}$. 
\end{lem}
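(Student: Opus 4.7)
The plan is to reduce the lemma to an elementary application of the strong law of large numbers by using the classical Gaussian representation of the uniform measure on a Euclidean sphere. Specifically, let $Z_1,\ldots,Z_{N+M}$ be i.i.d.\ standard Gaussian variables and set $Z:=(Z_1,\ldots,Z_{N+M})\in\R^{N+M}$. Rotational invariance of the isotropic Gaussian law, together with the independence of $\|Z\|$ and $Z/\|Z\|$, implies that $\sqrt{N+M}\,Z/\|Z\|$ is distributed uniformly on $S_{N+M}$. I would therefore realize a uniform sample as $\bs=(\br,\btau):=\sqrt{N+M}\,Z/\|Z\|$, which identifies the marginal as
\[
\btau \;=\; \frac{\sqrt{N+M}}{\|Z\|}\,W, \qquad W:=(Z_{N+1},\ldots,Z_{N+M}).
\]

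Here $W$ is a standard Gaussian vector on $\R^M$, so all that remains is to show that the scalar prefactor $\sqrt{N+M}/\|Z\|$ tends to $1$. This is immediate from the strong law of large numbers: with $M$ held fixed and $N\to\infty$, $\|Z\|^2/(N+M)\to 1$ almost surely, hence $\sqrt{N+M}/\|Z\|\to 1$ almost surely. Combining this with the fact that $W$ does not depend on $N$, I conclude that $\btau\to W$ almost surely, and therefore also weakly, giving the asserted convergence to the standard Gaussian distribution on $\R^M$.

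There is essentially no obstacle in this argument; the only point that deserves any attention is justifying the Gaussian representation of the uniform measure, which is the standard consequence of the characterization of the $O(N+M)$-invariant probability measure on $S_{N+M}$ and of the polar decomposition of an isotropic Gaussian vector. Beyond that, the proof is a one-line law-of-large-numbers observation, which is why the result is classically attributed to Poincar\'{e} as a basic feature of high-dimensional spheres.
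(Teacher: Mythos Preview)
The paper does not prove this lemma; it merely states it as a classical fact attributed to Poincar\'{e} and moves on. Your Gaussian-representation argument is the standard proof and is essentially correct.

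One small slip worth cleaning up: with your indexing $W=(Z_{N+1},\ldots,Z_{N+M})$, the vector $W$ \emph{does} change with $N$ once you couple all dimensions on a single infinite i.i.d.\ Gaussian sequence, so the sentence ``$W$ does not depend on $N$'' is not literally true and the almost-sure convergence is not well posed as written. The fix is immediate. By rotational invariance of the uniform law on $S_{N+M}$, the marginal of the last $M$ coordinates agrees in law with that of the first $M$; take instead $W=(Z_1,\ldots,Z_M)$, which is genuinely fixed in $N$, and let the remaining coordinates be $(Z_{M+1},\ldots,Z_{N+M})$. Then $\sqrt{N+M}/\|Z\|\to1$ a.s.\ by the SLLN and $\btau\to W$ a.s.\ follows exactly as you wrote. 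Alternatively, keep your indexing and replace the a.s.\ step by Slutsky's theorem: $\sqrt{N+M}/\|Z\|\to1$ in probability while $W$ has a fixed standard Gaussian law for every $N$, so the product converges in distribution to a standard Gaussian on $\R^M$.
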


Let $W_{M}\in\R^{M}$ be a random vector of i.i.d. standard Gaussian
variables. By the lemma,
\begin{align*}
 & \lim_{N\to\infty}\frac{\nu_{N+M}(S_{N+M}^{I})}{\nu_{N+M}(S_{N+M})}=\lim_{N\to\infty}\mu_{N+M}(S_{N+M}^{I})\\
 & =\P\left(\|W_{M}\|^{2}-M-a^{2}\in[-2a\sqrt{M},2a\sqrt{M}]\right).
\end{align*}
By the central limit theorem, for some constant $b>0$,
\begin{equation}
\lim_{M\to\infty}\lim_{N\to\infty}\frac{\nu_{N+M}(S_{N+M}^{I})}{\nu_{N+M}(S_{N+M})}=\P\left(W_{1}^{2}\in[-\sqrt{2}a,\sqrt{2}a]\right)=b.\label{eq:aI}
\end{equation}
Similarly, for the intervals $I^{+}$ and $I^{-}$ as defined in (\ref{eq:I})
we have that
\begin{equation}
\lim_{M\to\infty}\lim_{N\to\infty}\frac{\nu_{N+M}(S_{N+M}^{I^{\pm}})}{\nu_{N+M}(S_{N+M})}=\frac{b}{2}.\label{eq:aII}
\end{equation}

Define the two sets 
\[
D_{N,M}^{\pm}=\left\{ (\br,\btau)\in S_{N}\times S_{M}:\,\pm\frac{d}{dt}\bar{H}_{N+M}\Big(\br,\btau+t\frac{\btau}{\|\btau\|}\Big)\geq0\right\} .
\]
For $r\in(0,\sqrt{N+M})$, consider the mapping 
\begin{equation}
f_{r}:\,\begin{aligned}S_{N}(\sqrt{N})\times S_{M}(\sqrt{M}) & \to S_{N}(\eta(r))\times S_{M}(r),\\
(\br,\btau) & \mapsto\Big(\frac{\eta(r)}{\sqrt{N}}\br,\frac{r}{\sqrt{M}}\btau\Big).
\end{aligned}
\label{eq:fr}
\end{equation}
Define the subsets
\[
D_{N,M}^{\pm}(r):=f_{r}(D_{N,M}^{\pm})\subset S_{N}(\eta(r))\times S_{M}(r)
\]
and variables 
\[
X_{N,M}^{\pm}(r):=\int_{D_{N,M}^{\pm}(r)}e^{\bar{H}_{N+M}(\br,\btau)}d\nu_{N}\times\nu_{M}(\br,\btau)
\]
and 
\[
Y_{N,M}^{\pm}:=\frac{\int_{I^{\pm}}\frac{\sqrt{N+M}}{\eta(r)}X_{N,M}^{\pm}(r)dr}{\nu_{N+M}(S_{N+M}^{I^{\pm}})}.
\]

To lower bound the second term in (\ref{eq:E}), we will prove a lower
bound for $Y_{N,M}^{+}\vee Y_{N,M}^{-}$ (where we denote by $a\vee b$
the maximum of $a,b\in\R$). The main estimates we will use are in
the following lemma which we prove below. 
\begin{lem}
\label{lem:estimate}There exist some positive constants $A=A(\xi)$
and $B=B(\xi)$ depending only on $\xi$ and random variables $L^{(1)}=L_{N,M}^{(1)}$
and $L^{(2)}=L_{N,M}^{(2)}$ such that:
\begin{enumerate}
\item \label{enu:pt1}For any $t>A$,
\[
\P\left(L^{(i)}\geq(N+M)^{\frac{2-i}{2}}t\right)\leq\exp\left(-\frac{N+M}{B}(t-A)^{2}\right).
\]
\item \label{enu:pt2}For any $r>\sqrt{M}$, 
\[
X_{N,M}^{+}(r)\geq\left(\frac{\eta(r)}{\sqrt{N}}\right)^{N-1}\left(\frac{r}{\sqrt{M}}\right)^{M-1}X_{N,M}^{+}(\sqrt{M})e^{-L^{(1)}|\eta(r)-\sqrt{N}|-L^{(2)}|r-\sqrt{M}|^{2}}.
\]
\item \label{enu:pt3}For any $r<\sqrt{M}$,
\[
X_{N,M}^{-}(r)\geq\left(\frac{\eta(r)}{\sqrt{N}}\right)^{N-1}\left(\frac{r}{\sqrt{M}}\right)^{M-1}X_{N,M}^{-}(\sqrt{M})e^{-L^{(1)}|\eta(r)-\sqrt{N}|-L^{(2)}|r-\sqrt{M}|^{2}}.
\]
\end{enumerate}
\end{lem}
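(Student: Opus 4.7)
The plan is to deduce parts (\ref{enu:pt2}) and (\ref{enu:pt3}) from a change of variables through the dilation $f_r$ combined with a pointwise comparison of $\bar{H}_{N+M}$ at $f_r(\br,\btau)$ and at $(\br,\btau)$, and to define $L^{(1)}$ and $L^{(2)}$ in part (\ref{enu:pt1}) as suprema of the first and second radial derivatives of $\bar{H}_{N+M}$ over a thin shell around $S_{N+M}$, which are then controlled through the Borell-TIS inequality.

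Since $f_r$ rescales the two spherical factors by the constants $\eta(r)/\sqrt{N}$ and $r/\sqrt{M}$, its Jacobian relative to $\nu_N\times\nu_M$ equals $(\eta(r)/\sqrt{N})^{N-1}(r/\sqrt{M})^{M-1}$, and by construction $D_{N,M}^{\pm}(r)=f_r(D_{N,M}^{\pm})$. Hence
\[
X_{N,M}^{\pm}(r)=\left(\frac{\eta(r)}{\sqrt{N}}\right)^{N-1}\!\left(\frac{r}{\sqrt{M}}\right)^{M-1}\!\int_{D_{N,M}^{\pm}}e^{\bar{H}_{N+M}(f_r(\br,\btau))}d\nu_N\times\nu_M(\br,\btau).
\]
Since $f_{\sqrt{M}}$ is the identity, the analogous formula at $r=\sqrt{M}$ gives $X_{N,M}^{\pm}(\sqrt{M})=\int_{D_{N,M}^{\pm}}e^{\bar{H}_{N+M}(\br,\btau)}d\nu_N\times\nu_M$. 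It therefore suffices to prove the pointwise estimate
\[
\bar{H}_{N+M}(f_r(\br,\btau))\geq\bar{H}_{N+M}(\br,\btau)-L^{(1)}|\eta(r)-\sqrt{N}|-L^{(2)}|r-\sqrt{M}|^{2}
\]
on $D_{N,M}^{\pm}$ for the corresponding range of $r$.

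I would obtain this comparison by factoring $f_r$ as the radial $\btau$-dilation $(\br,\btau)\mapsto(\br,(r/\sqrt{M})\btau)$ followed by the radial $\br$-dilation. The second step, being a pure radial shift in $\br$ of length $|\eta(r)-\sqrt{N}|$, contributes at most $L^{(1)}|\eta(r)-\sqrt{N}|$ in absolute value once $L^{(1)}$ is chosen to dominate the absolute value of the radial $\br$-derivative of $\bar{H}_{N+M}$ uniformly on a fixed shell around $S_{N+M}$. For the first step, write $h(t):=\bar{H}_{N+M}(\br,\btau+t\btau/\|\btau\|)$ with $\|\btau\|=\sqrt{M}$ and set $t_0:=r-\sqrt{M}$. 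The definitions of $D_{N,M}^{\pm}$ imply $t_0 h'(0)\geq 0$, so Taylor's formula
\[
h(t_0)-h(0)=t_0 h'(0)+\int_{0}^{t_0}(t_0-s)h''(s)\,ds
\]
gives $h(t_0)\geq h(0)-L^{(2)}t_0^{2}$ as soon as $L^{(2)}$ dominates the modulus of the radial $\btau$-second derivative of $\bar{H}_{N+M}$ (times $\tfrac{1}{2}$, which is absorbed into $L^{(2)}$) on the same shell.

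For part (\ref{enu:pt1}) I would take $L^{(1)}$ and $L^{(2)}$ to be the suprema of, respectively, the first and second radial derivatives of $\bar{H}_{N+M}$ along unit directions, with the base point ranging over a shell thick enough to contain every point arising in the previous step. Both are continuous functionals of the centered Gaussian field $\bar{H}_{N+M}$, so the Borell-TIS inequality yields sub-Gaussian tails with variance proxy equal to the pointwise variance of the relevant derivative. A direct differentiation of the covariance $(N+M)\xi(R(\bs,\bs'))$ shows that this pointwise variance is of order $1$ for the first and of order $1/(N+M)$ for the second radial derivative, yielding expected suprema of order $\sqrt{N+M}$ and $1$ respectively, which matches the exponents $(N+M)^{(2-i)/2}$ in the statement. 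The main technical nuisance is to absorb the perturbation $s_N g_N$: the infinite sum $\sum_p 2^{-p}x_p g_{N,p}$ must be controlled uniformly in $p$, but the geometric decay $2^{-p}$ and the choice $s_N=N^{c}$ with $c<1/2$ ensure that the derivatives of $s_N g_N$ remain of the same order as those of $H_{N+M}$, so the Borell-TIS estimates transfer from $H_{N+M}$ to $\bar{H}_{N+M}$.
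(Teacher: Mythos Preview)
Your proposal is correct and follows essentially the same route as the paper: the same change of variables via $f_r$ with Jacobian $(\eta(r)/\sqrt{N})^{N-1}(r/\sqrt{M})^{M-1}$, the same two-step radial factoring with Taylor's theorem exploiting the sign condition $t_0 h'(0)\ge 0$ built into $D_{N,M}^{\pm}$, and Borell--TIS for the tail bounds in part~(\ref{enu:pt1}). The only cosmetic differences are that the paper takes $L^{(1)}$ and $L^{(2)}$ to be the suprema of the full gradient norm and of all directional second derivatives over the ball $\{\|\bs\|\le\sqrt{N+M}\}$ (rather than only radial derivatives over a shell), and it handles the perturbation by first conditioning on the uniform variables $x_p$ so that $\bar H_{N+M}$ becomes a genuine centered Gaussian field before invoking Borell--TIS.
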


For large $N$, using that for fixed $x\in\R$, $\sqrt{N+x}=\sqrt{N}+\frac{x}{2\sqrt{N}}+O(N^{-3/2})$,
one obtains that
\begin{align*}
\sup_{r\in I}\left(L^{(1)}|\eta(r)-\sqrt{N}|+L^{(2)}|r-\sqrt{M}|^{2}\right) & \leq\frac{a^{2}+2a\sqrt{M}}{\sqrt{N}}L^{(1)}+a^{2}L^{(2)}\\
 & \leq3\sqrt{\frac{M}{N}}L^{(1)}+L^{(2)}.
\end{align*}

Denote the ratio from (\ref{eq:E}) by
\[
T_{N,M}:=\frac{\int_{I}\frac{\sqrt{N+M}}{\eta(r)}X_{N,M}(r)dr}{\nu_{N+M}(S_{N+M}^{I})}.
\]
Since $X_{N,M}(r)\geq X_{N,M}^{\pm}(r)$, using (\ref{eq:aI}), (\ref{eq:aII})
and (\ref{eq:int2-2-1}) we have that 
\[
T_{N,M}\geq\frac{1}{4}Y_{N,M}^{\pm},
\]
assuming that $M\geq M_{0}(\xi)$ and $N\geq N_{0}(M,\xi)$, for appropriate
constants $M_{0}(\xi)$ and $N_{0}(M,\xi)$. 

On the event that $X_{N,M}^{+}(\sqrt{M})\geq\frac{1}{2}X_{N,M}(\sqrt{M})$,
using Part \ref{enu:pt2} of the lemma,
\[
Y_{N,M}^{+}\geq\frac{1}{2}\frac{X_{N,M}(\sqrt{M})}{\nu_{N}(S_{N})\nu_{M}(S_{M})}\exp\left(-3\sqrt{\frac{M}{N}}L^{(1)}-L^{(2)}\right).
\]
Respectively, using Part \ref{enu:pt3} of the lemma, on the event
that $X_{N,M}^{-}(\sqrt{M})\geq\frac{1}{2}X_{N,M}(\sqrt{M})$, the
same bound holds for $Y_{N,M}^{-}$.

Since $X_{N,M}(\sqrt{M})=X_{N,M}^{+}(\sqrt{M})+X_{N,M}^{-}(\sqrt{M})$,
deterministically, 
\[
X_{N,M}^{+}(\sqrt{M})\vee X_{N,M}^{-}(\sqrt{M})\geq\frac{1}{2}X_{N,M}(\sqrt{M})
\]
and 
\[
T_{N,M}\geq\frac{1}{8}\frac{X_{N,M}(\sqrt{M})}{\nu_{N}(S_{N})\nu_{M}(S_{M})}\exp\left(-3\sqrt{\frac{M}{N}}L^{(1)}-L^{(2)}\right).
\]

For $N$ and $M$ as above, we therefore have that 
\[
\E\log T_{N,M}\geq-\log8+\E\log\frac{X_{N,M}(\sqrt{M})}{\nu_{N}(S_{N})\nu_{M}(S_{M})}+\E\left(-3\sqrt{\frac{M}{N}}L^{(1)}-L^{(2)}\right).
\]
Note that the middle term above is equal to 
\[
\E\log\int_{S_{N}\times S_{M}}e^{\bar{H}_{N+M}(\br,\btau)}d\mu_{N}\times\mu_{M}(\br,\btau).
\]
Hence, by combining the above with (\ref{eq:E}) and (\ref{eq:aI}),
to complete the proof it remains to show that
\begin{equation}
\limsup_{M\to\infty}\limsup_{N\to\infty}\frac{1}{\sqrt{M}}\E\left(3\sqrt{\frac{M}{N}}L^{(1)}+L^{(2)}\right)\leq C(\xi),\label{eq:limsups}
\end{equation}
for some constant $C(\xi)$.

From Part \ref{enu:pt1} of Lemma \ref{lem:estimate} and the tail
formula,
\begin{align*}
\E L^{(1)} & \leq\sqrt{N+M}A+\int_{\sqrt{N+M}A}^{\infty}\exp\left(-\frac{N+M}{B}(\frac{t}{\sqrt{N+M}}-A)^{2}\right)dt\\
 & =\sqrt{N+M}A+\frac{\sqrt{\pi B}}{2}.
\end{align*}
Similarly,
\begin{align*}
\E L^{(2)} & \leq A+\frac{1}{2}\sqrt{\frac{\pi B}{N+M}}.
\end{align*}
This proves (\ref{eq:limsups}) and completes the proof. It remains
to prove Lemma \ref{lem:estimate}.

\subsection{Proof of Lemma \ref{lem:estimate}}

Let $L^{(1)}=L_{N,M}^{(1)}$ be the Lipschitz constant of $\bar{H}_{N+M}(\bs)$
over the ball of radius $\sqrt{N+M}$, 
\[
L^{(1)}:=\max_{\|\bs\|\leq\sqrt{N+M}}\|\nabla\bar{H}_{N+M}(\bs)\|=\max_{\|u\|=1}\max_{\|\bs\|\leq\sqrt{N+M}}u\cdot\nabla\bar{H}_{N+M}(\bs).
\]
Let $L^{(2)}=L_{N,M}^{(2)}$ be the maximal directional second order
derivative of $\bar{H}_{N+M}(\bs)$ over the same ball, 
\[
L^{(2)}:=\max_{\|u\|=1}\max_{\|\bs\|\leq\sqrt{N+M}}u^{T}\big(\nabla^{2}\bar{H}_{N+M}(\bs)\big)u.
\]

Suppose that $(\br,\btau)\in D_{N,M}^{+}$ and let $r>\sqrt{M}$.
Since
\begin{align*}
\|\frac{r}{\sqrt{M}}\btau-\btau\| & =|r-\sqrt{M}|
\end{align*}
and
\[
\|\frac{\eta(r)}{\sqrt{N}}\br-\br\|=|\eta(r)-\sqrt{N}|,
\]
by Taylor's approximation, 
\begin{align*}
\bar{H}_{N+M}(\br,\frac{r}{\sqrt{M}}\btau) & \geq\bar{H}_{N+M}(\br,\btau)-L^{(2)}|r-\sqrt{M}|^{2}
\end{align*}
and
\[
\bar{H}_{N+M}(\frac{\eta(r)}{\sqrt{N}}\br,\frac{r}{\sqrt{M}}\btau)\geq\bar{H}_{N+M}(\br,\frac{r}{\sqrt{M}}\btau)-L^{(1)}|\eta(r)-\sqrt{N}|.
\]

Hence,
\begin{align*}
 & \bar{H}_{N+M}\big(f(\br,\btau)\big)=\bar{H}_{N+M}(\frac{\eta(r)}{\sqrt{N}}\br,\frac{r}{\sqrt{M}}\btau)\\
 & \geq\bar{H}_{N+M}(\br,\btau)-L^{(1)}|\eta(r)-\sqrt{N}|-L^{(2)}|r-\sqrt{M}|^{2}.
\end{align*}

Clearly,
\[
\frac{\nu_{N}\big(D_{N,M}^{+}(r)\big)}{\nu_{N}\big(D_{N,M}^{+}(\sqrt{M})\big)}=\left(\frac{\eta(r)}{\sqrt{N}}\right)^{N-1}\left(\frac{r}{\sqrt{M}}\right)^{M-1}.
\]

Therefore,
\begin{align*}
 & X_{N,M}^{+}(r)=\int_{D_{N,M}^{+}(r)}e^{\bar{H}_{N+M}(\br,\btau)}d\nu_{N}\times\nu_{M}(\br,\btau)\\
 & =\frac{\nu_{N}\big(D_{N,M}^{+}(r)\big)}{\nu_{N}\big(D_{N,M}^{+}(\sqrt{M})\big)}\int_{D_{N,M}^{+}(\sqrt{M})}e^{\bar{H}_{N+M}\big(f(\br,\btau)\big)}d\nu_{N}\times\nu_{M}(\br,\btau)\\
 & \geq\left(\frac{\eta(r)}{\sqrt{N}}\right)^{N-1}\left(\frac{r}{\sqrt{M}}\right)^{M-1}X_{N,M}^{+}(\sqrt{M})e^{-L^{(1)}|\eta(r)-\sqrt{N}|-L^{(2)}|r-\sqrt{M}|^{2}}.
\end{align*}
This proves Part \ref{enu:pt2} of the lemma. Part \ref{enu:pt3}
follows by a similar argument.

For the rest of the proof we will work conditional on the uniform
random variables $x_{p}$. We will prove the bounds in Part \ref{enu:pt1}
with some constants $A=A(\xi)$ and $B=B(\xi)$ independent of the
values of $x_{p}$, which of course gives the same bounds unconditionally.
Note that under the conditioning, $\bar{H}_{N+M}(\bs)$ is a Gaussian
process. 

From the proof of \cite[Lemma 58]{geometryMixed}, one can see that
for some constant $A=A(\xi)$ that only depends on $\xi$, 
\begin{align*}
\E L^{(1)} & \leq\sqrt{N+M}A,\\
\E L^{(2)} & \leq A.
\end{align*}

Note that 
\[
\begin{aligned}\E(u\cdot\nabla\bar{H}_{N+M}(\bs))^{2} & =\frac{d}{dt}\Big|_{t=0}\frac{d}{ds}\Big|_{s=0}\E\left(\bar{H}_{N+M}(\bs+tu)\bar{H}_{N+M}(\bs+su)\right)\\
 & =(N+M)\frac{d}{dt}\Big|_{t=0}\frac{d}{ds}\Big|_{s=0}\xi_{N+M}^{x}\left(R(\bs+tu,\bs+su)\right),
\end{aligned}
\]
where, recalling the definition (\ref{eq:Hbar}), we define 
\[
\xi_{N}^{x}(t)=\xi(t)+\frac{s_{N}^{2}}{N}\sum_{p=1}^{\infty}4^{-p}x_{p}^{2}t^{p}.
\]
From this one can easily check that for any $u$ and $\bs$ as above,
\[
\E(u\cdot\nabla\bar{H}_{N+M}(\bs))^{2}\leq B,
\]
for some constant $B=B(\xi)$. By a similar argument, for such $u$
and $\bs$,
\[
\E(u^{T}\big(\nabla^{2}\bar{H}_{N+M}(\bs)\big)u)^{2}\leq\frac{B}{N+M},
\]
where we may need to increase the constant $B=B(\xi)$.

The bounds as in Part \ref{enu:pt1} of the lemma therefore follow
from the Borell-TIS inequality \cite{Borell,TIS}. \qed

\section{\label{sec:lemsubadd}Proof of Lemma \ref{lem:subadd}}

Recall the definition of the perturbed Hamiltonian
\begin{align}
\bar{H}_{N}^{x}(\bs) & =H_{N}(\bs)+s_{N}g_{N}^{x}(\bs),\label{eq:Hbar2}\\
g_{N}^{x}(\bs) & =\sum_{p=1}^{\infty}2^{-p}x_{p}g_{N,p}(\bs).\nonumber 
\end{align}
Here $x=(x_{p})_{p\geq1}$ are uniform variables in $[1,2]$, which
in the current proof we include in the notation to make the dependence
on $x$ explicit. Let $y=(y_{p})_{p\geq1}$ be an independent copy
of $x$. Define
\begin{equation}
\tilde{H}_{N,M}^{x,y}(\br,\btau)=H_{N+M}(\br,\btau)+s_{N}g_{N}^{x}(\br)+s_{M}g_{M}^{y}(\btau).\label{eq:Htilde}
\end{equation}

\begin{lem}
Let
\begin{align*}
A_{N,M} & =\E\log\int_{S_{N}\times S_{M}}e^{\bar{H}_{N+M}^{x}(\br,\btau)}d\mu_{N}\times\mu_{M}(\br,\btau)\\
 & -\E\log\int_{S_{N}\times S_{M}}e^{\tilde{H}_{N,M}^{x,y}(\br,\btau)}d\mu_{N}\times\mu_{M}(\br,\btau).
\end{align*}
Then, 
\[
\limsup_{M\to\infty}\limsup_{N\to\infty}\frac{1}{M}|A_{N,M}|=0.
\]
\end{lem}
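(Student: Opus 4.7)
The plan is to interpolate linearly between the two perturbations and extract the cancellation via the standard Gaussian covariance-derivative identity. Throughout I condition on the bounded variables $x,y\in[1,2]^{\mathbb{N}}$; since the constants below will be uniform in them, taking expectations at the end is automatic. The perturbation fields
\[
G_1(\br,\btau):=s_{N+M}g_{N+M}^{x}(\br,\btau),\qquad G_2(\br,\btau):=s_{N}g_{N}^{x}(\br)+s_{M}g_{M}^{y}(\btau)
\]
are independent centered Gaussians on $S_N\times S_M$ (since $g_{N+M,p}$, $g_{N,p}$, $g_{M,p}$ are mutually independent), independent also of $H_{N+M}$, with covariances
\[
K_1(\bs,\bs')=s_{N+M}^{2}\xi_x\bigl(R_{N+M}(\bs,\bs')\bigr),\qquad K_2(\bs,\bs')=s_N^{2}\xi_x(R_N)+s_M^{2}\xi_y(R_M),
\]
where $\xi_x(r):=\sum_{p\geq1}4^{-p}x_p^{2}r^{p}$ (and analogously $\xi_y$) is smooth on $[-1,1]$ with derivative bounded uniformly in $x\in[1,2]^{\mathbb{N}}$. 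I set $H(t):=H_{N+M}+\sqrt{1-t}\,G_2+\sqrt{t}\,G_1$ and
\[
\phi(t):=\E\log\int_{S_N\times S_M}e^{H(t)(\br,\btau)}d\mu_N\times\mu_M(\br,\btau),
\]
so that $\phi(1)-\phi(0)$ equals $A_{N,M}$ conditional on $x,y$. Since $H(t)$ is Gaussian with covariance affine in $t$ and $t$-derivative $K_1-K_2$, the standard interpolation identity underlying Guerra--Toninelli yields
\[
\phi'(t)=\tfrac12\E\bigl\langle D(\bs,\bs)-D(\bs,\bs')\bigr\rangle_{t},\qquad D:=K_1-K_2,
\]
with $\langle\cdot\rangle_t$ the two-replica Gibbs expectation under $H(t)$.

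The essential step is a pointwise upper bound on $|D|$ exhibiting cancellation. On $S_N\times S_M$ the full overlap decomposes as $R_{N+M}=\tfrac N{N+M}R_N+\tfrac M{N+M}R_M$, so $|R_{N+M}-R_N|\leq 2M/(N+M)$, while Taylor-expanding $x^{2c}$ yields $|s_{N+M}^{2}-s_N^{2}|=(N+M)^{2c}-N^{2c}=O(MN^{2c-1})$. Combining with the uniform Lipschitz bound on $\xi_x$,
\[
\bigl|K_1(\bs,\bs')-s_N^{2}\xi_x(R_N)\bigr|\leq|s_{N+M}^{2}-s_N^{2}|\cdot C+s_{N+M}^{2}\cdot C\cdot|R_{N+M}-R_N|=O(MN^{2c-1}),
\]
uniformly in $\bs,\bs'$, and the same bound holds on the diagonal. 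Therefore $|D(\bs,\bs')|\leq s_M^{2}\cdot C+O(MN^{2c-1})=O(M^{2c})+O(MN^{2c-1})$. Plugging this into $\phi'(t)$ and integrating in $t$,
\[
|A_{N,M}|\leq C_\xi\bigl(M^{2c}+MN^{2c-1}\bigr),
\]
and dividing by $M$, using $2c-1<0$, gives $\limsup_{N\to\infty}|A_{N,M}|/M\leq C_\xi M^{2c-1}$, which tends to $0$ as $M\to\infty$.

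The main obstacle is that the obvious bounds --- Jensen applied to each free energy separately, or a sup-norm bound on $G_1-G_2$ --- only control $|A_{N,M}|$ by $O(s_{N+M}^{2})=O((N+M)^{2c})$, which divided by $M$ diverges as $N\to\infty$ with $M$ fixed. The interpolation makes the saving cancellation explicit: the leading term $s_{N+M}^{2}\xi_x(R_{N+M})$ of $K_1$ matches the leading term $s_N^{2}\xi_x(R_N)$ of $K_2$ up to an $O(MN^{2c-1})$ error, leaving only the genuinely $M$-scale residue $s_M^{2}\xi_y(R_M)=O(M^{2c})=o(M)$.
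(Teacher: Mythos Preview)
Your proof is correct and follows essentially the same route as the paper: bound the covariance difference $K_1-K_2$ pointwise via the same triangle-inequality decomposition (matching $s_{N+M}^2\xi_x(R_{N+M})$ first to $s_{N+M}^2\xi_x(R_N)$, then to $s_N^2\xi_x(R_N)$, leaving the $s_M^2\xi_y(R_M)$ residue), and feed the resulting $O(MN^{2c-1})+O(M^{2c})$ bound into a Gaussian interpolation. The paper merely gestures at the last step (``By an interpolation argument using Gaussian integration by parts\ldots''), whereas you have written it out explicitly; the arguments are otherwise identical.
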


\begin{proof}
For $(\br^{1},\btau^{1})$ and $(\br^{2},\btau^{2})$ in $S_{N}\times S_{M}$
denote
\begin{equation}
\begin{aligned}R_{1,2}^{1} & =\frac{1}{N}\br^{1}\cdot\br^{2},\qquad R_{1,2}^{2}=\frac{1}{M}\btau^{1}\cdot\btau^{2},\\
R_{1,2} & =\frac{N}{N+M}R_{1,2}^{1}+\frac{M}{N+M}R_{1,2}^{2}.
\end{aligned}
\label{eq:R}
\end{equation}
Then,
\[
\E\left(\bar{H}_{N+M}(\br^{1},\btau^{1})\bar{H}_{N+M}(\br^{2},\btau^{2})\right)=(N+M)\xi(R_{1,2})+\eta_{N+M}^{x}(R_{1,2})
\]
and
\begin{align*}
 & \E\left(\tilde{H}_{N,M}(\br^{1},\btau^{1})\tilde{H}_{N,M}(\br^{2},\btau^{2})\right)\\
 & =(N+M)\xi(R_{1,2})+\eta_{N}^{x}(R_{1,2}^{1})+\eta_{M}^{y}(R_{1,2}^{2}),
\end{align*}
where we define
\begin{equation}
\eta_{N}^{x}(t):=s_{N}^{2}\sum_{p=1}^{\infty}4^{-p}x_{p}^{2}t^{p}.\label{eq:eta}
\end{equation}

The difference of the two covariance functions above can be bounded
by 
\begin{equation}
\begin{aligned} & |\eta_{N+M}^{x}(R_{1,2})-\eta_{N}^{x}(R_{1,2}^{1})-\eta_{M}^{y}(R_{1,2}^{2})|\\
 & \leq|\eta_{N+M}^{x}(R_{1,2})-\eta_{N+M}^{x}(R_{1,2}^{1})|+|\eta_{N+M}^{x}(R_{1,2}^{1})-\eta_{N}^{x}(R_{1,2}^{1})|+|\eta_{M}^{y}(R_{1,2}^{2})|.
\end{aligned}
\label{eq:eta2}
\end{equation}
For any $x_{p},\,y_{p}\in[1,2]$ we have the following. The first
term in the right-hand side of (\ref{eq:eta2}) is bounded by
\[
\frac{d}{dt}\eta_{N+M}^{x}(1)\cdot|R_{1,2}-R_{1,2}^{1}|\leq\frac{2M}{N+M}s_{N+M}^{2}\sum_{p=1}^{\infty}4^{1-p}p.
\]
The middle term is bounded by
\[
(s_{N+M}^{2}-s_{N}^{2})\sum_{p=1}^{\infty}4^{1-p}.
\]
And the last term is bounded by
\[
s_{M}^{2}\sum_{p=1}^{\infty}4^{1-p}.
\]

For large $N$, the sum of all three above is bounded by 
\[
CM(N+M)^{2c-1}+CM^{2c},
\]
or some constant $C>0$, where $c\in(1/4,1/2)$ is the constant such
that $s_{N}=N^{c}$.

By an interpolation argument using Gaussian integration by parts,
this easily implies that 
\[
|A_{N,M}|\leq CM(N+M)^{2c-1}+CM^{2c},
\]
from which the lemma follows. Here we skip the details on Gaussian
integration by parts as this is a standard application and since it
will be used in a more complicated situation below where we give a
full explanation.
\end{proof}
To complete the proof of Lemma \ref{lem:subadd} we will show that
\begin{equation}
\E\log\int_{S_{N}\times S_{M}}e^{\tilde{H}_{N,M}^{x,y}(\br,\btau)}d\mu_{N}\times\mu_{M}(\br,\btau)\geq N\bar{F}_{N}+M\bar{F}_{M}-C_{N,M},\label{eq:A}
\end{equation}
for some $C_{N,M}>0$ as in the statement of the lemma. To prove this
we will now use the Guerra-Toninelli interpolation technique \cite{GuerraToninelli}.
Unlike the original argument of \cite{GuerraToninelli}, here we include
in the Hamiltonians the perturbation terms in order to be able to
invoke Talagrand's positivity principle at the end of the proof. The
latter will be required for mixtures including odd interactions or,
more precisely, mixtures such that $\xi$ is not on $[-1,1]$ (see
Footnote \ref{fn:evenmix} below).

Suppose that $H_{N+M}(\br,\btau)$, $H_{N}(\br)$, $H_{M}(\btau)$,
$g_{N}^{x}(\br)$ and $g_{N}^{y}(\br)$ are defined on the same probability
space such that they are all independent of each other, conditionally
and unconditionally on the uniform independent variables $x$ and
$y$. We will denote integration w.r.t. the randomness of the uniform
variables $x$ and $y$ by $\E_{u}$ and integration w.r.t. to all
Gaussian variables in the definition of the Hamiltonians by $\E_{g}$.

Define on $S_{N}\times S_{M}$ an interpolating Hamiltonian in $t\in[0,1]$,
\begin{align*}
H_{t}(\br,\btau)= & \sqrt{t}H_{N+M}(\br,\btau)\sqrt{1-t}\left(H_{N}(\br)+H_{M}(\btau)\right)\\
 & +s_{N}g_{N}^{x}(\br)+s_{M}g_{M}^{y}(\btau).
\end{align*}
Define the partition function 
\[
Z_{t}=\int_{S_{N}\times S_{M}}e^{H_{t}(\br,\btau)}d\mu_{N}\times\mu_{M}(\br,\btau)
\]
and free energy 
\[
\varphi(t)=\E_{u}\E_{g}\log Z_{t}.
\]
Let $G_{t}$ denote the corresponding Gibbs measure on $S_{N}\times S_{M}$
with density
\begin{equation}
\frac{dG_{t}}{d\mu_{N}\times\mu_{M}}(\br,\btau)=\frac{\exp H_{t}(\br,\btau)}{Z_{t}}.\label{eq:Gt}
\end{equation}

Note that $\varphi(0)=N\bar{F}_{N}+M\bar{F}_{M}$ and $\varphi(1)$
is equal to the left-hand side of (\ref{eq:A}). To complete the proof
of the lemma it remains to show that 
\begin{equation}
\varphi(1)-\varphi(0)\geq-C_{N,M},\label{eq:Ceps}
\end{equation}
for some $C_{N,M}$ as above.

Using Gaussian integration by parts, one has that (see e.g. the proof
of \cite[Lemma 1.1]{PanchenkoBook})
\begin{align*}
\varphi'(t)= & \frac{1}{2}\E_{u}\E_{g}\left\langle \frac{1}{\sqrt{t}}H_{N+M}(\br,\btau)-\frac{1}{\sqrt{1-t}}\left(H_{N}(\br)+H_{M}(\btau)\right)\right\rangle _{t}\\
= & -\frac{1}{2}\E_{u}\E_{g}\left\langle U_{N,M}\right\rangle _{t},
\end{align*}
where
\[
U_{N,M}:=(N+M)\left(\xi(R_{1,2})-\frac{N}{N+M}\xi(R_{1,2}^{1})-\frac{M}{N+M}\xi(R_{1,2}^{2})\right),
\]
$\langle\cdot\rangle_{t}$ denotes averaging of $(\br^{1},\btau^{1})$
and $(\br^{2},\btau^{2})$ with respect to $G_{t}^{\otimes2}$ and
we use the notation from (\ref{eq:R}). To prove (\ref{eq:Ceps})
we will show that\footnote{\label{fn:evenmix}Note that for if the mixture is an even function
$\xi(t)=\xi(-t)$, then $\xi(t)$ is convex on $[-1,1]$ and (\ref{eq:Dbd})
follows immediately. The rest of the proof deals with arbitrary $\xi(t)$
which are in general only convex on $[0,1].$}
\begin{equation}
\limsup_{M\to\infty}\limsup_{N\to\infty}\sup_{t\in[0,1]}\frac{1}{M}\E_{u}\E_{g}\left\langle U_{N,M}\right\rangle _{t}\leq0.\label{eq:Dbd}
\end{equation}

Define
\[
\hat{H}_{t}(\br)=\int H_{t}(\br,\btau)d\mu_{M}(\btau)
\]
and let 
\[
\frac{d\hat{G}_{t}}{d\mu_{N}}(\br)=\frac{\exp\hat{H}_{t}(\br)}{\int\exp\hat{H}_{t}(\br')d\mu_{N}}
\]
be the corresponding Gibbs measure. Clearly,
\[
G_{t}^{\otimes2}(R_{1,2}^{1}\in\cdot)=\hat{G}_{t}^{\otimes2}(R_{1,2}^{1}\in\cdot).
\]

Note that for any $t$, we may write 
\[
\hat{H}_{t}(\br)=\hat{H}_{t}^{y}(\br)+s_{N}g_{N}^{x}(\br),
\]
for some Hamiltonian $\hat{H}_{t}^{y}(\br)$ which is independent
of $g_{N}^{x}(\br)$. Hence, by Talagrand's positivity principle \cite{TalagBook2003},
see Theorem 3.4 in \cite{PanchenkoBook},
\begin{align*}
 & \lim_{N\to\infty}\sup_{M,t}\E_{u}\E_{g}G_{t}^{\otimes2}(R_{1,2}^{1}\leq-\epsilon_{N})\\
 & =\lim_{N\to\infty}\sup_{M,t}\E_{u}\E_{g}\hat{G}_{t}^{\otimes2}(R_{1,2}^{1}\leq-\epsilon_{N})=0,
\end{align*}
for some non-increasing sequence $\epsilon_{N}\to0$.

By a similar argument, (with the same sequence $\epsilon_{M}$)
\[
\lim_{M\to\infty}\sup_{N,t}\E_{u}\E_{g}G_{t}^{\otimes2}(R_{1,2}^{2}\leq-\epsilon_{M})=0.
\]
And thus, (with $a\wedge b$ denoting the minimum of $a$ and $b$)
\[
\limsup_{M\to\infty}\limsup_{N\to\infty}\sup_{t\in[0,1]}\E_{u}\E_{g}G_{t}^{\otimes2}(R_{1,2}^{1}\wedge R_{1,2}^{2}\leq-\epsilon_{M})=0.
\]

Note that for any choice of $R_{1,2}^{1},\,R_{1,2}^{2}\in[-1,1]$,
\begin{align*}
|U_{N,M}| & \leq M\xi(R_{1,2})+N|\xi(R_{1,2})-\xi(R_{1,2}^{1})|+M|\xi(R_{1,2}^{2})|\\
 & \leq2M(\xi(1)+\xi'(1)).
\end{align*}
Hence, 
\begin{equation}
\limsup_{M\to\infty}\limsup_{N\to\infty}\frac{1}{M}\sup_{t\in[0,1]}\E_{u}\E_{g}\left\langle U_{N,M}\cdot\indic\{R_{1,2}^{1}\wedge R_{1,2}^{2}\leq-\epsilon_{M}\}\right\rangle _{t}=0.\label{eq:Dbd1}
\end{equation}

Define 
\[
\begin{aligned}R_{+}^{1} & =R_{1,2}^{1}\vee0,\qquad R_{+}^{2}=R_{1,2}^{2}\vee0,\\
R_{+} & =\frac{N}{N+M}R_{+}^{1}+\frac{M}{N+M}R_{+}^{2},\\
R_{+}' & =\frac{N}{N+M}R_{1,2}^{1}+\frac{M}{N+M}R_{+}^{2},
\end{aligned}
\]
and
\[
U_{N,M}^{+}:=(N+M)\left(\xi(R_{+})-\frac{N}{N+M}\xi(R_{+}^{1})-\frac{M}{N+M}\xi(R_{+}^{2})\right).
\]
Write
\begin{align}
 & |U_{N,M}-U_{N,M}^{+}|\nonumber \\
 & \leq|(N+M)\xi(R_{+}')+M\xi(R_{+}^{2})-(N+M)\xi(R_{1,2})-M\xi(R_{1,2}^{2})|\label{eq:U+1}\\
 & +|(N+M)\xi(R_{+})-N\xi(R_{+}^{1})-(N+M)\xi(R_{+}')+N\xi(R_{1,2}^{1})|.\label{eq:U+2}
\end{align}

Suppose that $R_{1,2}^{1}\wedge R_{1,2}^{2}\geq-\epsilon_{M}$. Then
(\ref{eq:U+1}) is bounded by $2\epsilon_{M}M\xi'(1)$ and (\ref{eq:U+2})
is bounded by $\epsilon_{M}\max_{s,r}|\frac{d}{ds}h(s,r)|$ where
we define 
\[
h(s,r):=(N+M)\xi(\frac{N}{N+M}s+\frac{M}{N+M}r)-N\xi(s).
\]
Note that
\[
\Big|\frac{d}{ds}h(s,r)\Big|=\Big|N\xi'(\frac{N}{N+M}s+\frac{M}{N+M}r)-N\xi'(s)\Big|\leq2M\xi''(1).
\]
Hence, on the event that $R_{1,2}^{1}\wedge R_{1,2}^{2}\geq-\epsilon_{M}$,

\[
U_{N,M}\leq U_{N,M}^{+}+2\epsilon_{M}M(\xi'(1)+\xi''(1)).
\]

Lastly, since $\xi$ in convex on $[0,1]$,
\[
U_{N,M}^{+}\leq0.
\]

By combining the two inequalities we obtain that 
\[
\limsup_{M\to\infty}\limsup_{N\to\infty}\frac{1}{M}\sup_{t\in[0,1]}\E_{u}\E_{g}\left\langle D_{N,M}\cdot\indic\{R_{1,2}^{1}\wedge R_{1,2}^{2}\geq-\epsilon_{M}\}\right\rangle _{t}\leq0,
\]
which together with (\ref{eq:Dbd1}) proves (\ref{eq:Dbd}) and completes
the proof.\qed

\bibliographystyle{plain}
\bibliography{master}

\end{document}